\documentclass{article}

\usepackage{tikz}
\usepackage[utf8]{inputenc} 
\usepackage[T1]{fontenc}    
\usepackage{url}            
\usepackage{booktabs}       
\usepackage{amsfonts}       
\usepackage{nicefrac}       
\usepackage{microtype}      
\usepackage{lipsum}		
\usepackage{enumitem}
\usepackage{graphicx}
\usepackage[square,numbers]{natbib}
\bibliographystyle{abbrvnat}
\usepackage{doi}
\usepackage{hyperref}
\usepackage{amsmath,amssymb,amsthm,eucal, graphicx, units}
\usepackage{float}
\newtheorem{theorem}{Theorem}[section]
\newtheorem{corollary}{Corollary}[theorem]
\newtheorem{lemma}[theorem]{Lemma}
\newtheorem{proposition}[theorem]{Proposition}


\title{The Restriction of Efficient Geodesics to the Non-separating Complex of Curves}
\date{}
\author{ Seth  Hovland and Greg Vinal}

\hypersetup{
pdftitle={Restriction of Efficient Geodesics to $N(C)$},
}

\begin{document}
\maketitle

\begin{abstract}
 In the complex of curves of a closed orientable surface of genus $g,$ $\mathcal{C}(S_g),$ a preferred finite set of geodesics between any two vertices, called \emph{efficient geodesics} introduced by Birman, Margalit, and Menasco in \cite{birman_margalit_menasco_2016}. The main tool used to establish the existence of efficient geodesics was a \emph{dot graph}, which recorded the intersection pattern of a reference arc with the simple closed curves associated with a geodesic path.  The idea behind the construction was that a geodesic that is not initially efficient contains shapes in its corresponding dot graph. 
 These shapes then correspond to surgeries that reduce the intersection with the reference arc.  In this paper, we show that the efficient geodesic algorithm is able to be restricted to the non-separating curve complex; the proof of this will involve analysis of the dot graph and its corresponding surgeries.  Moreover, we demonstrate that given any geodesic in the complex of curves we may obtain an efficient geodesic whose vertices, with the possible exception of the endpoints, are all nonseparating curves.
\end{abstract}

{\bf Key Words:} Curve Complex, Efficient Geodesics, Non-separating curves.

\section{Introduction}
\subsection{The Complex of Curves and Geodesics }
The complex of curves $\mathcal{C}(S)$ for a compact surface $S$ is a simplicial complex whose vertices correspond to isotopy classes of essential simple closed curves in $S$ and whose edges connect vertices with disjoint representatives.  We can endow the 0-skeleton of $\mathcal{C}(S)$ with metric by defining the distance between two vertices, $u,v$ to be the minimal number of edges in any path between them.  In this paper, as in \cite{birman_margalit_menasco_2016}, we assume that the surfaces we are considering are closed and have genus at least two.  It is a fundamental result that in this case, $\mathcal{C}(S)$ is connected.  Thus, the distance is defined for all pairs of vertices in $\mathcal{C}(S).$  The trouble is that the complex of curves is, in fact, too connected.  It turns out that $\mathcal{C}(S)$ is locally infinite (for any vertex $v$ there are infinitely many adjacent vertices $w$) and there are infinitely many geodesics between most pairs of vertices.  Thus, it is useful to have a preferred finite subset of geodesics to choose from.  This is the idea behind the introduction of \emph{tight geodesics} in \cite{masur_minsky_2000}. In \cite{birman_margalit_menasco_2016}, Birman, Margalit, and Menasco introduced an alternate preferred finite set of of geodesics, called \emph{efficient geodesics}.  The novel feature of this particular set of geodesics is the algorithm used to generate them.  The algorithm can (and has been \cite{GLENN2017115}) implemented on a computer to find efficient geodesics for small distances. \\

We let $\mathcal{N}(S)$ denote the subcomplex of $\mathcal{C}(S)$ spanned by vertices corresponding to non-separating simple closed curves.  This complex is called the \emph{complex of nonseparating curves.}  Again, it is a standard result that if the genus of $S$ is at least two, $\mathcal{N}(S)$ is connected.  We show below that given a geodesic in the complex of curves we can always find a geodesic that is restricted to the subcomplex of nonseparating curves. It is then natural ask whether efficient geodesics may also be restricted to this subcomplex.

\subsection{Efficient Geodesics}
The idea behind obtaining an efficient geodesic $v_0,\dots,v_n$ in $\mathcal{C}(S)$ is to iteratively decrease intersections with an arc as we move along the path. We explain further below.  The following construction and results were first introduced in \cite{birman_margalit_menasco_2016}, the reader familiar with these results is invited to see section 2. \\

Suppose that $\gamma$ is an arc in $S$ and $\alpha$ is a simple closed curve in $S.$  Then we say that $\gamma$ and $\alpha$ are in \emph{minimal position} if $\alpha$ is disjoint from the endpoints of $\gamma$ and the number of points of intersection of $\alpha$ and $\gamma$ is smallest over all simple closed curves homotopic to $\alpha$ through homotopies that do not pass through the endpoints of $\gamma.$ \\

    Let $v_0,\dots,v_n$ be a geodesic of length at least three in the complex of curves, and let $\alpha_0,\alpha_1,$ and $\alpha_n$ be representatives of $v_0,v_1,$ and $v_n$ that are pairwise in minimal position. A \emph{reference arc} for the triple $\alpha_0,\alpha_1,\alpha_n$ is an arc $\gamma$ that is in minimal position with $\alpha_1$ and whose interior is disjoint from $\alpha_0\cup\alpha_n.$\\
        We say that the oriented geodesic $v_0,\dots,v_n$ is \emph{initially efficient} if 
        $$
        \vert\alpha_1\cap\gamma\vert\leq n-1
        $$
for all choices of reference arcs $\gamma.$  Finally, we say that $v=v_0,\dots,v_n=w$ is \emph{efficient} if the oriented geodesic $v_k,\dots,v_n$ is initially efficient for each $0\leq k\leq n-3$ and the oriented geodesic $v_n,v_{n-1},v_{n-2},v_{n-3}$ is also initially efficient. Thus, to test the efficiency of a geodesic we look at all the triples $v_k,v_{k+1},v_n$ and count the intersection of $v_{k+1}$ with any reference arc. While it may seem impossible to check intersections with \emph{all} reference arcs, it turns out that there are finitely many of them.  Moreover, in special cases it is sufficient to check the intersections of $\alpha_i\cap\alpha_n$ for $1\leq i\leq n-1$ \cite{birman_margalit_menasco_2016}.\\

Given a vertex path $v_0,\dots, v_n$ in $\mathcal{C}(S)$ with representative curves $\alpha_0,\dots,\alpha_n$ and an oriented reference arc $\gamma$, we may traverse $\gamma$ in the direction of its orientation and record the order in which the curves $\alpha_0,\dots,\alpha_n$ intersect $\gamma.$  The result is a sequence of natural numbers $\sigma\in \{1,\dots,n-1\}^N.$ Where $N$ is the minimal cardinality of $\gamma\cap(\alpha_1\cup\cdots\cup\alpha_{n-1}).$ The sequence $\sigma$ is called the \emph{intersection sequence} of the $\alpha_i$ along $\gamma.$ \\

The \emph{complexity} of an oriented path $v_0,\dots,v_n\in \mathcal{C}(S)$ is defined to be 
$$
\sum_{k=1}^{n-1}(i(v_0,v_k)+i(v_k,v_n)).
$$
We say that a sequence $\sigma$ of natural numbers is \emph{reducible} under the following circumstances: whenever $\sigma$ arises as an intersection sequence for a path $v_0,\dots,v_n$ in $\mathcal{C}(S)$ there is another path $v_0',\dots,v_n'$ with smaller complexity.  This allows us restate the existence of initally efficient paths in terms of this complexity measurement. 
\begin{proposition}
    Suppose $\sigma$ is a sequence of elements of $\{1,\dots,n-1\}.$ If $\sigma$ has more than $n-1$ entries equal to 1, then $\sigma$ is reducible.
\end{proposition}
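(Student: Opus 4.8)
The plan is to read the hypothesis geometrically. The entries of $\sigma$ equal to $1$ are exactly the points of $\gamma\cap\alpha_1$, so the assumption ``more than $n-1$ ones'' says that some reference arc $\gamma$ satisfies $|\gamma\cap\alpha_1|\ge n$. From such a $\gamma$ I would produce a competing path $v_0',\dots,v_n'$ with $v_0'=v_0$, $v_n'=v_n$, and strictly smaller complexity, which is precisely what reducibility demands. The mechanism is surgery of the curve $\alpha_1$ along subarcs of the reference arc, read off from the dot graph.

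Concretely, let $q_1,\dots,q_m$ (with $m\ge n$) be the points of $\gamma\cap\alpha_1$ in the order they occur along $\gamma$. Consecutive points $q_j,q_{j+1}$ cobound a subarc $\delta_j\subset\gamma$ whose interior misses $\alpha_1$; replacing one of the two subarcs of $\alpha_1$ running between $q_j$ and $q_{j+1}$ by a pushed-off copy of $\delta_j$ yields a surgered simple closed curve $\beta_j$, together with its complementary curve. The first key point is that these surgeries are essentially free against the endpoints: since the interior of $\gamma$ is disjoint from $\alpha_0\cup\alpha_n$, and $\alpha_1$ is already disjoint from $\alpha_0$ by adjacency along the geodesic, every $\beta_j$ is disjoint from $\alpha_0$ and hence a legitimate neighbor of $v_0$. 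The second key point is a reduction against $\alpha_n$: the two arcs into which $q_j,q_{j+1}$ cut $\alpha_1$ together carry all $i(\alpha_1,\alpha_n)$ of its intersections with $\alpha_n$ (as $\alpha_1,\alpha_n$ are in minimal position and $\delta_j$ avoids $\alpha_n$), so one of the two surgered curves meets $\alpha_n$ in at most $\lfloor i(\alpha_1,\alpha_n)/2\rfloor < i(\alpha_1,\alpha_n)$ points. Each surgery therefore offers a candidate replacement for $v_1$ that does not increase intersection with $v_0$ and strictly decreases intersection with $v_n$.

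The threshold $n-1$ should emerge from a pigeonhole argument bounding how many of the $m$ candidate surgeries can fail to yield an admissible complexity-reducing path. A candidate fails only if $\beta_j$ is inessential, is isotopic to a curve already in the path, or cannot be spliced into a length-$n$ path without offsetting the gain; I would charge each such failure to a distinct intermediate vertex among $v_1,\dots,v_{n-1}$ (equivalently, to one of the levels of the dot graph through which $\delta_j$ is forced to pass). Since there are $n-1$ such vertices but at least $n$ candidates, at least one surgery survives, and splicing its output into the path produces the desired $v_0',\dots,v_n'$ of strictly smaller complexity.

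The main obstacle, and the step where the detailed dot-graph analysis is unavoidable, is exactly this charging scheme: one must show both that an admissible surgered curve is genuinely essential and distinct from the existing vertices, and that the decrease in intersection with $v_n$ is not cancelled by increases in the remaining summands $i(v_0,v_k)+i(v_k,v_n)$ of the complexity. Establishing a clean injection from failed candidates into the $n-1$ intermediate vertices is the crux; by comparison, verifying that the surgery respects minimal position and keeps the path connected is routine once $\gamma$ and its dot graph are fixed.
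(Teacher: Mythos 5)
Your reading of the hypothesis is right ($|\gamma\cap\alpha_1|\ge n$), and the disjointness of the surgered curve from $\alpha_0$ and the halving of intersections with $\alpha_n$ are fine as far as they go. But there is a genuine gap at the center of the argument: you only surger $\alpha_1$, and the replacement arc $\delta_j\subset\gamma$ is only guaranteed to miss $\alpha_1$, $\alpha_0$, and $\alpha_n$ in its interior --- it can, and in general does, cross $\alpha_2,\dots,\alpha_{n-1}$. So $\beta_j$ need not be disjoint from $\alpha_2$, and the sequence $v_0,\beta_j,v_2,\dots,v_n$ need not be a path at all. This is not a boundary case to be absorbed by a charging scheme; it is the main difficulty, and it is why the argument the paper relies on surgers an entire chain of curves $\alpha_k,\dots,\alpha_{k+m}$ simultaneously, with the signs of consecutive surgeries constrained by the directed graph so that each new curve stays disjoint from its neighbors in the path. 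The dot graph is precisely the bookkeeping device that locates a place (an empty, unpierced box or hexagon) where such a coherent chain of surgeries exists.

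Relatedly, the pigeonhole you invoke is not the one that does the work, and it is not actually carried out: you assert that each failed candidate can be charged injectively to one of the $n-1$ intermediate vertices, but give no reason this injection exists --- indeed all $n$ candidates can fail simultaneously if every subarc $\delta_j$ meets $\alpha_2$. The actual combinatorial step is different: one first puts $\sigma$ into sawtooth form by commutations, then observes that more than $n-1$ dots in row $1$ of the dot graph, each lying on an ascending segment of height at most $n-1$, forces two ascending segments to align so as to create an empty, unpierced box or hexagon of type 1 or 2; the prescribed surgeries on that polygon then reduce the complexity. You have correctly identified where the difficulty lives, but the proposal as written does not contain the idea (coordinated multi-curve surgery located via the dot-graph polygons) needed to close it.
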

From the above proposition we can deduce the existence of initially efficient geodesics. 
\begin{proposition}
    Let $g\geq 2.$ If $v$ and $w$ are vertices of $\mathcal{C}(S),$ with $d(v,w)\geq 3,$ then there exists an initially efficient geodesic from $v$ to $w.$
\end{proposition}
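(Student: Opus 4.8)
The plan is to derive the existence of initially efficient geodesics from the preceding proposition about reducible sequences. Recall that a geodesic $v_0,\dots,v_n$ is initially efficient precisely when $|\alpha_1\cap\gamma|\leq n-1$ for every reference arc $\gamma$. So I would argue by contradiction: suppose there is \emph{no} initially efficient geodesic from $v$ to $w$. Among all geodesics from $v$ to $w$, choose one of minimal complexity $\sum_{k=1}^{n-1}(i(v_0,v_k)+i(v_k,v_n))$; such a minimizer exists because complexity is a non-negative integer. By our contradiction hypothesis, this minimal-complexity geodesic is not initially efficient, so there is some reference arc $\gamma$ with $|\alpha_1\cap\gamma|\geq n$.

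The next step is to pass from intersection with the reference arc to a statement about an intersection sequence. First I would fix representatives $\alpha_0,\dots,\alpha_n$ realizing the pairwise minimal positions and put $\gamma$ in minimal position with $\alpha_1$ while keeping its interior disjoint from $\alpha_0\cup\alpha_n$. Traversing $\gamma$ and recording the order in which the curves $\alpha_1,\dots,\alpha_{n-1}$ meet it produces the intersection sequence $\sigma\in\{1,\dots,n-1\}^N$. The crucial observation is that each intersection point of $\alpha_1$ with $\gamma$ contributes exactly one entry equal to $1$ to $\sigma$; hence $\sigma$ has at least $|\alpha_1\cap\gamma|\geq n$ entries equal to $1$. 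Since $n>n-1$, the sequence $\sigma$ has strictly more than $n-1$ entries equal to $1$.

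By the previous proposition, $\sigma$ is therefore reducible: there exists a path $v_0',\dots,v_n'$ from $v$ to $w$ of strictly smaller complexity than $v_0,\dots,v_n$. I would then need to confirm that this reduced path is still a geodesic, i.e.\ that it has the same length $n$ and still connects $v$ to $w$. This is the point where the notion of reducibility must be invoked carefully: the reduction produces a shorter-complexity \emph{path} between the same endpoints, and because $d(v,w)=n$ forces any path between $v$ and $w$ to have length at least $n$, the reduced path cannot be shorter than $n$ and hence must again be a geodesic. This contradicts the minimality of the complexity of $v_0,\dots,v_n$, completing the argument.

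The main obstacle I anticipate is the bookkeeping in the second step, namely verifying that the reducibility of $\sigma$ yields a competitor that is genuinely a geodesic between the \emph{same} endpoints $v$ and $w$ rather than merely some lower-complexity configuration. The definition of reducibility as stated guarantees a lower-complexity path $v_0',\dots,v_n'$, but I would want to confirm that the surgery underlying the reduction fixes $v_0$ and $v_n$ (so that $v_0'=v$ and $v_n'=w$) and preserves the length $n$; granting these, the geodesic property follows immediately from $d(v,w)=n$. A secondary subtlety is ensuring the reference arc $\gamma$ witnessing the failure of initial efficiency can be chosen so that its intersection sequence is defined, which requires $\gamma$ to be in minimal position with $\alpha_1$ and disjoint from the endpoints; this is exactly the setup built into the definition of a reference arc, so it should cause no trouble.
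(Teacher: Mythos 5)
Your proposal is correct and follows essentially the same route the paper (and Birman--Margalit--Menasco) intends: take a geodesic of minimal complexity, note that failure of initial efficiency forces the intersection sequence to have at least $n > n-1$ entries equal to $1$, apply the reducibility proposition, and contradict minimality since the reduced path of length $n$ between vertices at distance $n$ is again a geodesic. The paper itself leaves this deduction implicit, and the points you flag (fixed endpoints, preserved length) are exactly the ones the definition of reducibility is designed to supply.
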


We note that the above definitions and propositions all follow when restricted to the nonseparating curve complex.

\subsection{Sawtooth form and the dot graph}
The proof of Proposition 1.1 was carried out in three stages. First, the intersection sequence was put into a normal form.  This is called \emph{sawtooth form}.  Then, associated to the sawtooth form for the sequence is a diagram called the \emph{dot graph}.  The reduciblility of an intersection sequence then corresponds to certain geometric features in the dot graph. We review these now. 

We may exchange the order of intersection of two curves that are adjacent in the intersection sequence by performing a \emph{commutation} as described in \cite{birman_margalit_menasco_2016}.  The result is a sequence that is in \emph{sawtooth form}.  That is, we say a sequence $(j_1,j_2,\dots, j_k)$ of natural numbers is in \emph{sawtooth form} if $$
j_i<j_{i+1}\Rightarrow j_{i+1}=j_i+1
$$   
An example of a sequence in sawtooth form is $(1,2,2,3,4,3,4,2,3,4,5).$  Given a sequence of natural numbers in sawtooth form, we also consider its \emph{ascending sequences}, these are the maximal subsequences of the form $k,k+1,\dots,k+m.$  In the above example the ascending sequences are $(1,2),(2,3,4),(3,4),(3,4), (2,3,4,5).$  It is clear that if we have an intersection sequence and we perform a finite number of commutations we may get the intersection sequence into sawtooth form while keeping the number of intersections of the $\alpha_i$'s with $\gamma$ constant.\\

Next, given an intersection sequence $\sigma$ in sawtooth form, we may regard it as a function ${1,\dots,N}\to\mathbb{N}$ and plot it in $\mathbb{R}^2_{\geq 0}.$  The points of the graph of a sequence will be called \emph{dots.}  We decorate the graph by connecting the dots that lie on a a given line of slope 1; these line segments are called \emph{ascending segments.}  The resulting decorated graph is called the \emph{dot graph} of $\sigma$ and is denoted $G(\sigma).$  See figure \ref{fig:Dotgraphexample}. Again, the idea behind this construction is that given a geodesic that is not efficient we can see shapes in its corresponding dot graph that correspond to surgeries that reduce the intersection with the reference arc.\\
  \begin{figure}[h!]
        \centering
        \includegraphics[scale=0.8]{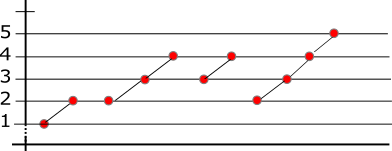}
        \caption{The Dot Graph of $(1,2,2,3,4,3,4,2,3,4,5)$}
        \label{fig:Dotgraphexample}
    \end{figure}

\subsection{Dot graph polygons and surgery}
This section is a summary of section 3.3 in \cite{birman_margalit_menasco_2016}.  We first review the surgeries described, then in the next section we discuss the results of these surgeries when restricted to $\mathcal{N}(S).$ \\

After putting an intersection sequence into sawtooth form and constructing its dot graph, it was shown that if the dot graph contained certain geometric shapes, it corresponded to a sequence that was reducible.  These shapes were called \emph{dot graph polygons.}  In particular, the existence of a \emph{box}, \emph{hexagon of type I} or  \emph{hexagon of type II} in the dot graph (shown in Figure \ref{fig:dotgraphpolys}, respectively), implied that the sequence $\sigma$ was reducible. To remove these shapes from the dot graph, \emph{surgeries} on the curves in the intersection pattern corresponding to these shapes were introduced. \\
  \begin{figure}[h!]
        \centering
        \includegraphics[width=\linewidth,scale=0.8]{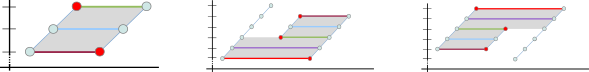}
        \caption{Dot Graph Polygons (Box, Hexagon Type 1, and Hexagon Type 2)}
        \label{fig:dotgraphpolys}
    \end{figure}

We do surgery on a curve $\alpha$ that intersects our intersection arc $\gamma$ in at least two points.  We draw a neighborhood of $\gamma$ so that it is horizontal and oriented to the right. We then remove from $\alpha$ small neighborhoods of its points of intersection with $\gamma,$ this results in a pair of curves.  We will then join two of the endpoints back together forming a new simple closed curve, and discard the other curve. Depending on how we join pairs of endpoints,  we say that $\alpha'$ is obtained from $\alpha$ by $++,$ $+-,$ $-+,$ or $--$ surgery along $\gamma.$ The first symbol is $+$ or $-$ depending on whether the first endpoint of $\alpha$ lies to the left or right of $\gamma,$ respectively. Similarly for the second symbol. When we are considering an arbitrary simple closed curve, exactly two of the four possible surgeries result in a simple closed curve. If we give $\alpha$ an orientation then two intersection points of $\alpha$ and $\gamma$ can either agree or disagree in orientation.  If the agree, then the $+-$ and $-+$ surgeries, which are called \emph{odd},
 result in a simple closed curves.  Otherwise, the $++$ and $--$ surgeries, called \emph{even} surgeries result in a simple closed curve.\\

  \begin{figure}[h!]
        \centering
        \includegraphics[width=\linewidth,scale=0.8]{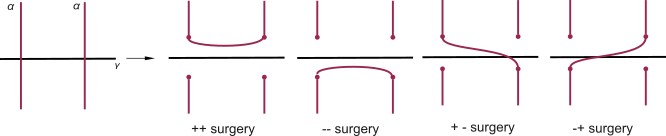}
        \caption{Surgery on a curve}
        \label{fig:surgtypes}
    \end{figure}

Suppose that we have a geodesic in $\mathcal{C}(S)$ containing $\alpha$ as a representative for some vertex $v_i.$ If we are to perform surgery on $\alpha$ then it must intersect our  intersection arc $\gamma$ at least twice (otherwise it stays in the geodesic and is not replaced).   Thus, $\alpha$ either intersects $\gamma$ consecutively, or between these intersections $\gamma$ intersects with at least one other curve $\beta.$ In the complex of curves, we can immediately get rid of the first case, where $\alpha$ intersects $\gamma$ consecutively by performing the surgeries described above. This does not follow so easily in the subcomplex of nonseparating curves (See Proposition \ref{Prop2.6}).  However, if $\alpha$ does not intersect $\gamma$ consecutively, as is the case when we see boxes, and hexagons in the dot graph, we will see that we have the same choice of surgeries as before. \\

Using the above surgeries, it was shown in \cite{birman_margalit_menasco_2016} that a dot graph with an empty, unpierced box or an empty, unpierced hexagon of type 1 or 2 corresponds to a sequence that is reducible. This was done by prescribing a sequence of surgeries that replaced the $\alpha_i$ curves with new $\alpha_i'$ that resulted in a path with smaller complexity. Below we state the required sequence of surgeries corresponding to the type of dot graph polygon.\\

Suppose the dot graph $G(\sigma)$ has an empty, unpierced box $P.$ Then the corresponding sequence of intersections along $\gamma$ has the form: 
 $$\alpha_k,\dots,\alpha_{k+m},\alpha_k,\dots,\alpha_{k+m}$$
where $1\leq k\leq k+m\leq n-1.$ For the vertices not in $\{ k,\dots, k+m\}$, they remain unchanged.  We define $\alpha_k', \dots,\alpha_{k+m}'$ inductively: for $i=k,\dots,k+m$ the curve $\alpha_i'$ is obtained by performing surgery along $\gamma$ between the two points of $\alpha_i\cap \gamma$ corresponding to dots of $P$ the surgeries are chosen so that they form a path in the directed graph below.  
 \begin{figure}[h!]
        \centering
        \includegraphics{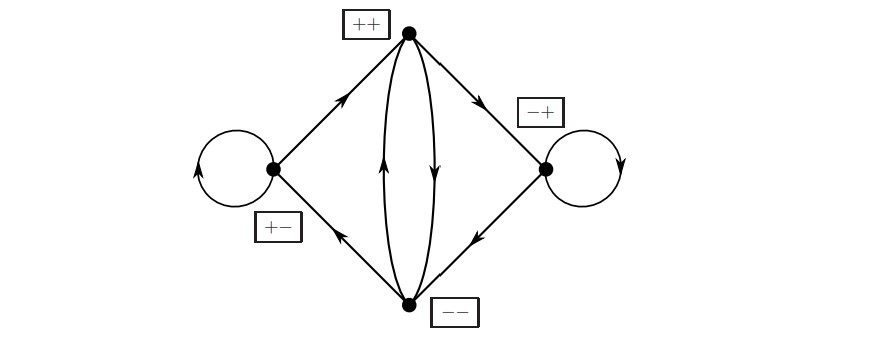}
        \label{fig:directedgraph}
    \end{figure}
The vertices of the graph correspond to the four types of surgeries described above, the rule is that the second sign of the origin of a directed edge is opposite of the first sign of the terminus. It is clear from the graph that the desired sequence of surgeries exists.  We demonstrate this procedure below, where we perform $-+$ surgery on $\alpha_3,$ then $--$ surgery on $\alpha_4$ and finally $+-$ surgery on $\alpha_5.$  It is an easy check that replacing the curves $\alpha_i$ with these new ones results in a reduced intersection sequence $\sigma.$ 

\begin{figure}[h!]
        \centering
        \includegraphics[width=\linewidth, height=6.5cm]{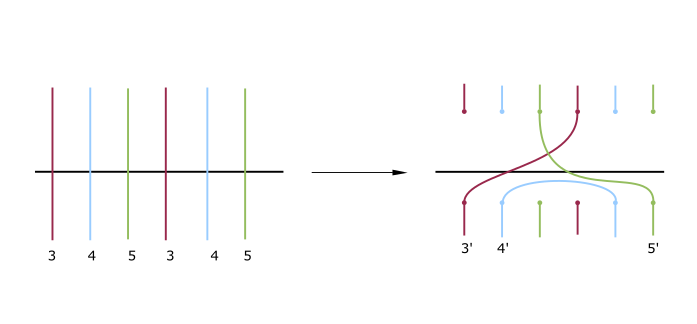}
        \label{fig:surgerybox}
    \end{figure}

 Suppose the dot graph $G(\sigma)$ has an empty, unpierced hexagon $P$ of type 1. The case of a type 2 hexagon is nearly identical so it will be omitted.  By definition of sawtooth form and of a type 1 hexagon, there are no ascending segments of $G(\sigma)$ in the vertical strip between the leftmost and middle ascending edges of $P$ and any ascending segments of $G(\sigma)$ that lie in the vertical strip between the middle and rightmost ascending segments have their highest point strictly below the lower-right horizontal edge of $P.$ See Figure \ref{fig:hexpoly} below: 
  \begin{figure}[h!]
        \centering
        \includegraphics[scale=0.8]{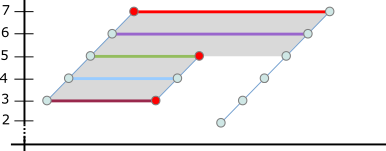}
        \caption{Type 1 Hexagon}
        \label{fig:hexpoly}
    \end{figure}
    
 It follows that the dots of $P$ correspond to a sequence of intersections along $\gamma$ of the following form 

$$\alpha_k,\dots,\alpha_{k+m},\alpha_k,\dots,\alpha_{k+l},\alpha_{j_1},\dots,\alpha_{j_p},\alpha_{k+l},\dots,\alpha_{k+m}$$
where $1\leq k\leq k+l\leq k+m\leq n-1,p\geq 0,$ and each $j_i<\alpha_{k+l}.$  See the above figure where $k=3,l=2,m=4,$ and $p=0.$
  
As in the case with the box, whenever we have $\alpha_i$ with $i\not\in\{k,\dots,k+m\}$ we set $\alpha_i'=\alpha_i.$  Each of the remaining $\alpha_i$ correspond to two dots in $P$ except for $\alpha_{k+l}$ which corresponds to three. Let $\alpha_{k+l}'$ be the curve obtained from $\alpha_{k+l}$ via surgery along $\gamma$ between the first two (leftmost) points of $\alpha_{k+l}\cap \gamma$ corresponding to dots of $P$ and satisfying the following property: $\alpha_{k+l}'$ does not contain the arc of $\alpha_{k+l}$ containing the third (rightmost) point of $\alpha_{k+l}\cap \gamma$ corresponding to a dot on $P.$  We then define $\alpha_{k+l-1}',\dots,\alpha_k'$ inductively as before using the directed graph above, finally, we define $\alpha_{k+l+1}',\dots,\alpha_{k+m}'$ inductively as before. It is readily verified that this procedure reduces $\sigma.$
 \begin{figure}[h!]
        \centering
        \includegraphics[width=\linewidth,scale=0.8]{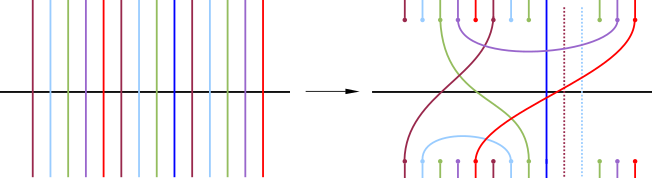}
        \caption{The surgery described above on a type 1 hexagon}
        \label{fig:hexsurg}
    \end{figure}

 Using these surgeries to remove from the dot graph the above polygons results in an initially efficient geodesic.  The last step is to inductively produce initially efficient geodesics for the triples $v_k,\dots,v_n$  for each $0\leq k\leq n-3$ and the oriented geodesic $v_n,v_{n-1},v_{n-2},v_{n-3}.$ This was done in Section 3.5 of \cite{birman_margalit_menasco_2016}. The exact inductive argument works when restricting to $\mathcal{N}(S),$ so our work lies solely in showing the construction of initially efficient geodesics restricts to $\mathcal{N}(S).$

\section{Existence of Efficient Geodesics in Complex of Non-Separating Curves}

Our main result is that efficient geodesics exist in the complex of nonseparating curves.  Since this is a subcomplex of the complex of curves, the fact that there are finitely many of them follows from Theorem 1.1 in \cite{birman_margalit_menasco_2016}.

\begin{theorem}
Let $g\geq 2.$  If $v$ and $w$ are vertices of $\mathcal{N}(S_g)$ with $d(v,w)\geq 3,$ then there exists an efficient geodesic from $v$ to $w$ in $\mathcal{N}(S_g).$   Additionally, there are finitely many efficient geodesics from $v$ to $w.$
\end{theorem}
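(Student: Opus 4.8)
The plan is to follow the architecture of \cite{birman_margalit_menasco_2016} and to verify that every step survives the restriction to $\mathcal{N}(S_g)$. Finiteness is essentially inherited: since $\mathcal{N}(S_g)$ is a subcomplex of $\mathcal{C}(S_g)$, any efficient geodesic we produce in $\mathcal{N}(S_g)$ is among the finitely many efficient geodesics of $\mathcal{C}(S_g)$ guaranteed by Theorem 1.1 of \cite{birman_margalit_menasco_2016}. For existence I would reduce, exactly as in the excerpt, to producing \emph{initially efficient} geodesics inside $\mathcal{N}(S_g)$: the passage from initially efficient to efficient is the inductive argument of Section 3.5 of \cite{birman_margalit_menasco_2016}, which only truncates and reorients the given geodesic and hence preserves the property of having non-separating vertices. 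So it suffices to prove the non-separating analogue of Proposition 1.1: if $v_0,\dots,v_n$ lies in $\mathcal{N}(S_g)$ and its intersection sequence has more than $n-1$ ones, then it is reducible \emph{through a path that again lies in} $\mathcal{N}(S_g)$. Since the only reducing operations are the box and hexagon surgeries recalled above, the whole theorem comes down to one assertion: these surgeries can be arranged so that every new curve $\alpha_i'$ is again non-separating.

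The engine for this is a homological lemma. Orient $S_g$ and work in $H_1(S_g;\mathbb{Z}/2)$, where a simple closed curve is non-separating precisely when its class is nonzero. When we surger a non-separating curve $\alpha$ along $\gamma$ between two of its intersection points, the two admissible surgeries (the two of matching parity that yield a simple closed curve) produce curves $c_1,c_2$ built from the two complementary arcs of $\alpha$ together with push-offs of the same sub-arc $\delta$ of $\gamma$. As $\mathbb{Z}/2$-chains the two copies of $\delta$ cancel, so $[c_1]+[c_2]=[\alpha]\neq 0$; hence \emph{at least one} of the two admissible surgeries on $\alpha$ yields a non-separating curve. This is exactly why the restriction is possible at all, and it settles any surgery performed on a single curve in isolation.

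The main obstacle is promoting this per-curve statement to a single, globally consistent choice. The box surgery modifies the entire block $\alpha_k,\dots,\alpha_{k+m}$, and the surgery types are not independent: to keep the output curves pairwise disjoint (so that the result is again a geodesic) the types must trace a directed path in the graph of the excerpt. A short analysis of that graph shows the admissible sequence is rigid — once the surgery on $\alpha_k$ is chosen, the parities of the remaining curves force every subsequent surgery — so there are exactly two global choices, and they are complementary, selecting opposite arcs curve by curve. The homological lemma makes each curve non-separating for at least one of the two choices, but a priori the separating outcomes could be split between the two choices, so that neither global choice is everywhere non-separating; ruling this out is the crux. I would attack it by combining the relation $[c_i^A]+[c_i^B]=[\alpha_i]$ with the additional relations imposed by the interleaved box pattern $\alpha_k,\dots,\alpha_{k+m},\alpha_k,\dots,\alpha_{k+m}$ and by the disjointness of consecutive surgered curves, to show that the separating outcomes cannot obstruct both global choices simultaneously; the same bookkeeping, applied to the three-dot curve $\alpha_{k+l}$ and the two sub-blocks, handles the type 1 (and symmetrically type 2) hexagon. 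This homological analysis of the dot graph and its surgeries is the technical heart of the argument.

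Finally I would address the one configuration to which the above does not apply, namely a curve meeting $\gamma$ in two \emph{consecutive} points; as the excerpt notes, this case genuinely behaves differently in $\mathcal{N}(S_g)$ and is isolated in Proposition \ref{Prop2.6}, but it does not occur in the box and hexagon reductions and so does not interfere with the existence argument. Assembling the pieces — the homological lemma, the global-consistency analysis for boxes and hexagons, and the inherited inductive and finiteness statements — yields an efficient geodesic from $v$ to $w$ all of whose vertices, including the endpoints $v,w\in\mathcal{N}(S_g)$, are non-separating, which is the assertion of the theorem.
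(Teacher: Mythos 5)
Your outer architecture matches the paper's: finiteness is inherited from the subcomplex inclusion, the passage from initially efficient to efficient is the unchanged inductive step of Section 3.5 of \cite{birman_margalit_menasco_2016}, and everything reduces to showing the box and hexagon surgeries can be performed within $\mathcal{N}(S)$. Your homological lemma ($[c_1]+[c_2]=[\alpha]\neq 0$ in $H_1(S;\mathbb{Z}/2)$, so at least one of the two admissible surgeries on an isolated curve is non-separating) is essentially the paper's Proposition \ref{Prop2.6}, which is proved by the equivalent cut-and-reglue contradiction. But the step you yourself flag as ``the crux'' --- showing that the per-curve guarantee can be made globally consistent with the directed-graph constraints --- is left as a plan rather than a proof, and it is aimed at the wrong target. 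The paper's actual mechanism for the box, and for all but one configuration of the hexagon, is different and much simpler: the surgered curve $\alpha_i'$ consists of an arc of $\alpha_i$ (disjoint from the adjacent curves of the path) together with a push-off of the sub-arc of $\gamma$ between the two surgered dots, and because the polygon is empty and unpierced that sub-arc crosses the adjacent curve $\alpha_{i\pm 1}$ exactly once; hence $\alpha_i'$ is non-separating by the parity criterion (Proposition \ref{thebigprop}) \emph{regardless of which admissible surgery is chosen}. No consistency problem arises, so no homological bookkeeping over the whole block is needed.

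More seriously, your proposed resolution of the crux --- ``show that the separating outcomes cannot obstruct both global choices simultaneously'' --- is provably not available in general, because there is a configuration in which only \emph{one} surgery is admissible and it really can produce a separating curve. This is the paper's Subcase 2.2: a type 1 hexagon with total length $m=l+1$, where the surgery type on $\alpha_{k+l+1}$ is forced (the complementary choice violates the disjointness/arc-discarding constraints) and, if it is the even surgery, the output may separate. The paper handles this not by ruling the bad case out but by abandoning surgery on $\alpha_{k+l+1}$ altogether and substituting a new curve $\beta$ assembled from the two arcs of $\alpha_{k+l}$ lying inside $\alpha_{k+l+1}'$, then verifying $\beta$ is non-separating (it meets $\alpha_{k+l-1}$ once, using that the tail is nonempty) and disjoint from $\alpha_{k+l}'$ and $\alpha_{k+l+2}$. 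This replacement curve is the genuinely new idea of the paper, and your proposal has no counterpart for it; as written, your argument would stall exactly at this subcase. (A minor additional point: the consecutive-intersection case does occur in the overall reduction --- e.g.\ it is how a hexagon of tail length zero is dispatched --- so it cannot simply be set aside, though your homological lemma does cover it.)
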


\subsection{Proof of Theorem 2.1}
The result lies in following exactly the proof setup for the complex of curves in \cite{birman_margalit_menasco_2016}.  We prove the existence of initially efficient geodesics in $\mathcal{N}(S)$ (see proposition below).  Then the additional inductive step will follow exactly as outlined in Section 3.5 of \cite{birman_margalit_menasco_2016}.  The key observation of this paper is that Lemma \ref{Lemma 2.2} below holds when restricting to $\mathcal{N}(S)$:

\begin{lemma}\label{Lemma 2.2}
    Suppose that $\sigma$ is a sequence of natural numbers in sawtooth form and that $G(\sigma)$ has an empty, unpierced box or an empty, unpierced
    hexagon of type 1 or 2. Then $\sigma$ is reducible (in $\mathcal{N}(S))$. 
\end{lemma}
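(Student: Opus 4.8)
The plan is to prove that the surgery procedures for removing a box or hexagon from the dot graph, already known to reduce $\sigma$ in $\mathcal{C}(S)$, can be carried out so that every newly produced curve $\alpha_i'$ is \emph{nonseparating}. Since the reduction in complexity is purely combinatorial (it only depends on the intersection sequence along $\gamma$, via Proposition~1.1), the content of the lemma is entirely topological: I must verify that the prescribed surgeries never force a separating curve. The overall strategy is to follow the $\mathcal{C}(S)$ argument verbatim for the complexity bookkeeping, and to interpose at each surgery step a check that nonseparability is preserved, exploiting the freedom encoded in the directed graph of surgery types.

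**First I would** set up the key topological criterion. Recall that a simple closed curve is nonseparating if and only if its homology class in $H_1(S;\mathbb{Z}/2)$ is nonzero, equivalently if it has nonzero algebraic intersection with some other curve. When we surger $\alpha_i$ along $\gamma$ between two of its intersection points, we cut $\alpha_i$ into two arcs $a$ and $b$ and reconnect through a neighborhood of $\gamma$; the two admissible surgeries (the two that yield simple closed curves, determined by the parity of the orientations at the two intersection points) produce curves whose $\mathbb{Z}/2$-homology classes are $[\,a \cup \gamma\text{-arc}\,]$ and $[\,b \cup \gamma\text{-arc}\,]$, and crucially these two classes sum to $[\alpha_i]$. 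So if $\alpha_i$ is nonseparating, at least one of the two admissible surgeries is nonseparating (both classes cannot vanish, since they sum to the nonzero class $[\alpha_i]$). This is the mechanism I would lean on.

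**The next step** is to reconcile this ``at least one good choice'' fact with the rigidity of the prescribed surgery sequence. In the box and hexagon procedures the \emph{type} of each surgery ($++,+-,-+,--$) is not free: it is dictated by the path in the directed graph, whose edges enforce that the second sign of one surgery is opposite the first sign of the next. The delicate point is therefore that the \emph{homologically good} surgery and the \emph{combinatorially required} surgery must be made to coincide. I would examine the directed graph and observe that at each vertex $i$ we actually have two compatible outgoing choices (the graph is set up precisely so that a valid path always exists, and in fact branches); I would argue that this branching gives exactly the freedom to always select the nonseparating alternative at each step, and then to propagate the constraint forward along the chain $\alpha_k',\dots,\alpha_{k+m}'$. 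For the hexagon one must additionally handle the distinguished curve $\alpha_{k+l}$, which carries three dots and whose surgery is pinned by the stated ``does not contain the arc through the third point'' condition; here I would check separately that this forced choice still lands on a nonseparating curve, using the same homological sum argument applied to the relevant pair of arcs.

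**The hard part will be** this last reconciliation: showing that the combinatorial constraints of the directed graph are simultaneously satisfiable with the homological nonseparability constraints \emph{for every curve in the chain at once}, rather than just curve-by-curve. A naive greedy choice might satisfy nonseparability at step $i$ but leave no admissible directed-graph edge that is also nonseparating at step $i+1$. I expect the resolution to be that the ``sum of the two admissible classes equals $[\alpha_i]$'' relation interacts favorably with the sign-flipping rule, so that the propagation never gets stuck; making this precise — possibly by a short case analysis on the four surgery types and the genus-$\geq 2$ hypothesis guaranteeing enough homological room — is where the real work lies, and it is exactly the point at which the argument genuinely uses that we are in $\mathcal{N}(S)$ rather than merely quoting the $\mathcal{C}(S)$ proof.
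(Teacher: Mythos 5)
Your setup is right — the complexity bookkeeping carries over verbatim and the whole content is checking that the prescribed surgeries can be made to land on nonseparating curves — and you correctly sense that the tension is between the homologically good choice and the directed-graph constraint. But there are two gaps relative to what actually makes the argument work. First, you miss the observation that does almost all of the work in the paper: for a box, and for every curve in a hexagon except possibly $\alpha_{k+l+1}$, the surgered curve $\alpha_i'$ meets an \emph{adjacent} curve of the sequence ($\alpha_{i\pm 1}$, or old $\alpha_i$ in the odd case) in exactly one point, because the polygon is empty and unpierced; by the parity criterion (a separating curve has even geometric intersection with everything) this forces $\alpha_i'$ to be nonseparating \emph{for either admissible surgery}. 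So there is no propagation problem at all for these curves — both directed-graph-compatible choices are automatically nonseparating — and your worry about a greedy choice getting stuck simply does not arise there. Your ``the two admissible classes sum to $[\alpha_i]$, so one is nonzero'' mechanism is essentially the paper's Proposition 2.6, but that proposition is only used for the trivial (consecutive-intersection) surgeries, not for the box/hexagon chains.

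Second, and more seriously, the one place where your anticipated difficulty is real — the type~1 hexagon with $m=l+1$, where the forced even surgery on $\alpha_{k+l+1}$ may produce a separating curve and the directed-graph constraint pins down which even surgery you must use — is left unresolved in your proposal, and your conjectured resolution (that the sign-flipping rule interacts favorably with the homological sum so the propagation never gets stuck) is not how the paper escapes. The paper's resolution is to abandon the surgered curve $\alpha_{k+l+1}'$ altogether and substitute a different curve $\beta$, built by an even surgery on the two arcs of $\alpha_{k+l}$ lying inside $\alpha_{k+l+1}'$; the nonempty tail of the hexagon guarantees $\beta$ meets a tail curve once (hence is nonseparating), and a separate argument shows $\beta$ is disjoint from $\alpha_{k+l}'$ and $\alpha_{k+l+2}$ so it is a legitimate replacement vertex. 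This is a genuinely new surgery, not a cleverer selection among the two admissible ones, so the hard part you flagged cannot be closed by the mechanism you propose.
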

This will allow us to prove:
\begin{proposition}
    Let $g\geq 2.$  If $v$ and $w$ are vertices of $\mathcal{N}(S)$ with $d(v,w)\geq 3,$ then there exists an initially efficient geodesic from $v$ to $w$ in $\mathcal{N}(S).$
\end{proposition}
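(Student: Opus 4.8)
The plan is to follow verbatim the route that Birman, Margalit, and Menasco take from Proposition 1.1 to Proposition 1.2, observing that Lemma \ref{Lemma 2.2} is the only step along that route where the surface topology (as opposed to the combinatorics of the sequence $\sigma$) actually enters. Concretely, I would first record the $\mathcal{N}(S)$ analogue of Proposition 1.1: \emph{if $\sigma$ is a sequence of elements of $\{1,\dots,n-1\}$ with more than $n-1$ entries equal to $1$, then $\sigma$ is reducible in $\mathcal{N}(S)$.} The derivation of this is immediate from what is already available. Commuting $\sigma$ into sawtooth form and forming the dot graph $G(\sigma)$ are purely combinatorial operations that leave the isotopy classes of the $\alpha_i$ (and hence their separating/non-separating type) untouched, and the counting argument of Birman--Margalit--Menasco shows that more than $n-1$ entries equal to $1$ forces an empty, unpierced box or hexagon of type 1 or 2 in $G(\sigma)$. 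Lemma \ref{Lemma 2.2} then upgrades the presence of such a polygon to reducibility \emph{within} $\mathcal{N}(S)$.

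With this analogue in hand, initial efficiency follows from a minimal-complexity argument. Using the restriction result for geodesics stated in the introduction, the collection of geodesics from $v$ to $w$ that lie entirely in $\mathcal{N}(S)$ is nonempty; since complexity takes values in the non-negative integers, I would select among these a geodesic $v_0,\dots,v_n$ of minimal complexity. If it failed to be initially efficient, some reference arc $\gamma$ would satisfy $|\alpha_1\cap\gamma|>n-1$, so its intersection sequence would have more than $n-1$ entries equal to $1$ and would therefore be reducible in $\mathcal{N}(S)$ by the previous paragraph. The reduction only alters the curves $\alpha_k,\dots,\alpha_{k+m}$ with $1\le k\le k+m\le n-1$, so it fixes the endpoints $v_0=v$ and $v_n=w$; it produces a path of length $n$ in $\mathcal{N}(S)$, which is automatically a geodesic because $d(v,w)=n$; and it strictly decreases complexity. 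This contradicts minimality, so the chosen geodesic is initially efficient.

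I do not expect a genuine obstacle here: the substantive topological content has been isolated into Lemma \ref{Lemma 2.2}, and the remainder is the bookkeeping needed to confirm that each reduction keeps us inside $\mathcal{N}(S)$ and inside the set of geodesics from $v$ to $w$. The two points that deserve explicit verification are (i) that the surgeries of Lemma \ref{Lemma 2.2} leave every new curve essential and non-separating, so that $v_0',\dots,v_n'$ remains a vertex path of $\mathcal{N}(S)$, and (ii) that consecutive curves stay disjoint and distinct, so that the reduced family is a genuine path of length $n$ rather than a degenerate walk; both are part of the surgery construction recalled in Section 1.4. Once these are noted, the minimization is well-founded because complexity is a strictly decreasing non-negative integer under reduction, so no separate termination argument is required beyond the appeal to minimality.
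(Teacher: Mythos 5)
Your proposal is correct and matches the paper's route: the paper likewise isolates all the topological content into Lemma \ref{Lemma 2.2} (together with Proposition \ref{geoinN(S)} to get a starting geodesic in $\mathcal{N}(S)$) and then invokes the Birman--Margalit--Menasco minimal-complexity argument verbatim, exactly as you spell out. The only difference is that you make explicit the derivation (the $\mathcal{N}(S)$ analogue of Proposition 1.1 and the minimality contradiction) that the paper leaves as a citation to \cite{birman_margalit_menasco_2016}.
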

In \cite{birman_margalit_menasco_2016} the existence of the above polygons in the dot graph came with surgeries on the curves representing vertices in the given geodesic that removed these shapes in the dot graph.  It is possible to replace each new curve with with a nonseparating curve, most of the time the argument follows exactly the same.  We need only to show that this is the case, and to treat the few outlying examples. 

We begin with the well-known fact that a separating curve must intersect any other curve an even number of times. 
\begin{proposition}\label{thebigprop}
     Let $\alpha$ be a simple closed curve in $S.$  If $\alpha$ separates $S$ into two components, then for any simple closed curve $\beta$ we have the geometric intersection number, $i(\alpha,\beta)$, is even.
 \end{proposition}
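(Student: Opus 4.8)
The plan is to give the standard side-switching argument, working with a representative of $\beta$ that is transverse to $\alpha$ and realizes the geometric intersection number. First I would fix representatives of $\alpha$ and $\beta$ in minimal position, so that they meet transversely in exactly $i(\alpha,\beta)$ points. Since $\alpha$ is separating it is in particular two-sided, so we may write $S \setminus \alpha = S_1 \sqcup S_2$, where $S_1$ and $S_2$ are the two components, each having $\alpha$ as part of its boundary.

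Next I would cut $\beta$ along its intersection points with $\alpha$. Removing the $k := i(\alpha,\beta)$ intersection points from the circle $\beta$ yields $k$ open arcs (assuming $k \geq 1$; the case $k=0$ is immediate, as $0$ is even), and these arcs are cyclically ordered around $\beta$. Each arc has interior disjoint from $\alpha$ and hence lies entirely in the interior of either $S_1$ or $S_2$. The key local observation is that, because each intersection point is a transverse crossing of the two-sided curve $\alpha$, the two arcs of $\beta$ incident to a given intersection point lie on opposite sides of $\alpha$: one in $S_1$ and the other in $S_2$. Thus, as we traverse $\beta$ once, the labels $S_1, S_2$ of the successive arcs must alternate.

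I would then close the loop: the arcs form a cyclic sequence $a_1, a_2, \dots, a_k$ in which consecutive entries (including the pair $a_k, a_1$) carry opposite labels. An alternating cyclic sequence of two symbols closes up consistently only if it has even length, so $k$ is even; that is, $i(\alpha,\beta)$ is even.

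The main point to justify carefully is the local side-switching claim, namely that a transverse intersection of $\beta$ with the separating curve $\alpha$ genuinely moves $\beta$ from one component of $S \setminus \alpha$ to the other, rather than keeping it on the same side. This is exactly where separatedness does the work: a collar neighborhood of $\alpha$ has its two boundary circles lying in $S_1$ and $S_2$ respectively, and a transverse crossing passes from one collar boundary to the other. An alternative and perhaps cleaner route is homological: a separating curve is null-homologous, so $[\alpha] = 0$ in $H_1(S;\mathbb{Z}/2)$, whence the mod-$2$ intersection pairing gives $i(\alpha,\beta) \equiv \langle [\alpha],[\beta]\rangle = 0 \pmod 2$, once one invokes that the geometric intersection number reduces mod $2$ to the algebraic one. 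I would present the combinatorial argument as the main line, since it is elementary and self-contained.
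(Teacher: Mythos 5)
Your proof is correct and takes essentially the same approach as the paper: both arguments decompose $\beta$ into arcs lying in the two complementary components of $\alpha$ and extract the parity of $i(\alpha,\beta)$ from that decomposition. Your alternating-label formulation (together with the collar-neighborhood justification of side-switching) is if anything a more carefully spelled-out version of the paper's ``each arc has two endpoints, so intersections come in pairs.''
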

\begin{proof}
    Let $S_\alpha$ be the surface that results from splitting $S$ along $\alpha.$   In $S_\alpha,$ the curve $\beta$ is either unchanged (all of $\beta$ is in one of the connected components of $S_\alpha$) or  $\beta$ is a collection of arcs with endpoints along $\alpha.$ Each of these arcs has two endpoints.  Thus intersections, should they exist, come in pairs. In either case this gives an even number of intersections with $\alpha.$  
\end{proof}
The quick test we will use when deciding if a curve made via surgery is nonseparating is to see if $i(\alpha',\beta)=1$ for some curve $\beta$. Next, we must show that given a geodesic in the complex of curves, we may take all the vertices in the path to be nonseparating curves.  This allows us to start with geodesic in $\mathcal{N}(S)$ and attempt to make it efficient. \\

\begin{proposition}\label{geoinN(S)}
    Given a geodesic in $\mathcal{C}(S)$ with endpoints $v,w\in\mathcal{N}(S)$ there exists a geodesic from $v$ to $w$ with each vertex a nonsepartating curve.   
\end{proposition}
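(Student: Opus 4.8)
\subsection*{Proof proposal}

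The plan is to modify the given geodesic one interior vertex at a time, each time replacing a separating curve by a nonseparating one without changing the length, and to induct on the number of separating interior vertices. Write the geodesic as $v_0,\dots,v_n$ with $v_0=v$ and $v_n=w$. Since $v$ and $w$ are already nonseparating, I only need to treat an interior vertex $v_i$ (with $1\le i\le n-1$) that is separating, and it suffices to exhibit a nonseparating curve $v_i'$ disjoint from both $v_{i-1}$ and $v_{i+1}$: replacing $v_i$ by $v_i'$ leaves a path of length $n$ between two vertices at distance $n$, hence again a geodesic, but with one fewer separating interior vertex.

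The key geometric input is that the two neighbors of a separating vertex must lie on the same side of it. Because $v_0,\dots,v_n$ is a geodesic, $d(v_{i-1},v_{i+1})=2$, so $v_{i-1}$ and $v_{i+1}$ admit no disjoint representatives, i.e. $i(v_{i-1},v_{i+1})>0$. On the other hand, $v_i$ is essential and separating, so $S\setminus v_i$ has two components $A$ and $B$, each a subsurface with a single boundary circle and of genus at least $1$ (a genus-zero side would force $v_i$ to bound a disk, contradicting essentiality). As $v_{i-1}$ and $v_{i+1}$ are disjoint from $v_i$, each has a representative contained in $\bar A$ or in $\bar B$; were they on opposite sides they would be disjoint, contradicting $i(v_{i-1},v_{i+1})>0$. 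Hence both lie on the same side, say in $\bar A$.

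I then take $v_i'$ to be a curve in the interior of $\bar B$ that is nonseparating in $\bar B$, which exists since $\bar B$ has genus at least $1$. Such a $v_i'$ is disjoint from everything in $\bar A$, in particular from $v_{i-1}$ and $v_{i+1}$. To confirm that $v_i'$ is nonseparating in $S$, I exhibit a curve $d\subset\bar B$ with $i(v_i',d)=1$; then $v_i'$ cannot separate $S$, since by Proposition \ref{thebigprop} a separating curve meets every curve an even number of times. Finally, $v_i'\neq v_{i-1}$ and $v_i'\neq v_{i+1}$: if $v_i'$ coincided with one neighbor then, being disjoint from the other, that neighbor would be disjoint from the remaining one, forcing $d(v_{i-1},v_{i+1})\le 1$, a contradiction. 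Thus $v_i'$ is adjacent to both neighbors, and the modified path is again a geodesic with all original vertices except $v_i$ unchanged.

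The only real obstacle is the same-side dichotomy, and it is resolved precisely by the geodesic hypothesis $d(v_{i-1},v_{i+1})=2$; everything else is the standard fact that a positive-genus side of a separating curve carries a nonseparating curve, together with the parity criterion of Proposition \ref{thebigprop}. Iterating this replacement strictly decreases the number of separating interior vertices and therefore terminates, yielding a geodesic from $v$ to $w$ in which every vertex is nonseparating.
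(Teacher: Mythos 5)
Your proposal is correct and follows essentially the same route as the paper: locate a separating interior vertex, use the geodesic condition $d(v_{i-1},v_{i+1})=2$ to force both neighbors onto one side of it, and replace it by a nonseparating curve on the other side, iterating until no separating vertices remain. The extra details you supply (the positive genus of each side, the parity test via Proposition \ref{thebigprop}, and the check that $v_i'$ differs from its neighbors) only make explicit what the paper leaves implicit.
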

\begin{proof}
Let $v=v_0,\dots,v_n=w$ be a geodesic in $\mathcal{C}(S).$  If all the vertices in this geodesic are nonseparating curves then we are done.  Assume that $v_i$ is a separating curve in the above geodesic with lowest index $i$.  Then consider the subpath $v_{i-1},v_i,v_{i+1}$ because $v_i$ is separating it divides the surface $S$ into two components.  Both $v_{i-1}$ and $v_{i+1}$ are disjoint from $v_i$ however since there is not an edge between them they intersect each other.  Therefore, they are both in one of the connected components of $S_v.$  In the other connected component choose a nonseparating simple closed curve $v_i'.$ Such a curve exists, as otherwise $v_i$ would be inessential.  This curve is disjoint from both $v_{i-1}$ and $v_{i+1}$ and may replace $v_i$ in the geodesic. Continuing in this way gives a geodesic in the subcomplex of nonseparating curves.    
\end{proof}

\begin{corollary}
    $\mathcal{N}(S)$ is connected.
\end{corollary}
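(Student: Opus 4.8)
The plan is to deduce connectedness of $\mathcal{N}(S)$ directly from Proposition \ref{geoinN(S)} together with the connectedness of $\mathcal{C}(S)$ recalled in the introduction. First I would fix two arbitrary vertices $v,w$ of $\mathcal{N}(S)$; by definition these are isotopy classes of nonseparating simple closed curves, so in particular the hypothesis $v,w\in\mathcal{N}(S)$ of the proposition is automatically satisfied. Since $\mathcal{C}(S)$ is connected for closed surfaces of genus at least two, the distance $d(v,w)$ is finite and there exists at least one geodesic $v=v_0,\dots,v_n=w$ in $\mathcal{C}(S)$.

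Next I would invoke Proposition \ref{geoinN(S)}, which takes this ambient geodesic and, keeping the endpoints $v$ and $w$ fixed, produces a geodesic from $v$ to $w$ all of whose vertices are nonseparating. Such a path is an edge-path in the $1$-skeleton whose every vertex lies in $\mathcal{N}(S)$ and whose every edge is an edge of $\mathcal{N}(S)$, since consecutive vertices are disjoint nonseparating curves. Consequently $v$ and $w$ are joined by a path lying entirely inside $\mathcal{N}(S)$. As $v$ and $w$ were arbitrary, this shows that any two vertices of $\mathcal{N}(S)$ lie in the same component, and hence $\mathcal{N}(S)$ is connected.

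I do not anticipate any genuine obstacle: the mathematical content is carried entirely by Proposition \ref{geoinN(S)}, and the corollary is a formal consequence. The only small points one must check are that the endpoints of the chosen ambient geodesic are nonseparating and that the lowest-index separating vertex, if any, is interior to the path so that the curve-replacement argument of the proposition applies; both are immediate here, since by hypothesis $v$ and $w$ are themselves nonseparating, forcing the first separating vertex to satisfy $1\leq i\leq n-1$.
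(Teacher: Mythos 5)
Your argument is correct and follows essentially the same route as the paper: both deduce the corollary from the connectedness of $\mathcal{C}(S)$ together with the curve-replacement procedure of Proposition \ref{geoinN(S)}. Your version is slightly more careful in checking that the replacement argument applies (endpoints nonseparating, separating vertices interior), but the substance is identical.
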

\begin{proof}
   Since $\mathcal{C}(S)$ is connected. Replacing each separating curve with a nonseparating one gives a path in $\mathcal{N}(S).$ 
\end{proof}

\subsection{The Trivial Surgeries}
The above proposition now allows us to start with a geodesic in $\mathcal{N}(S),$ we wish to now do simplifying surgeries on it.  We begin with the case where our reference arc $\gamma$ sees a curve $\alpha$ consecutively. In $\mathcal{C}(S),$ we performed an even or odd surgery depending on the orientation of $\alpha$ and were guaranteed that each resulted in a simple closed curve.  However, when we restrict to the subcomplex of nonseparating curves we are no longer guaranteed that both curves are nonseparating.  For instance, take a separating curve and connect sum it with a nonseparating curve.  Then performing surgery along an arc separates the curve into a separating curve and an nonseparating one. The proposition below however shows that performing surgery on a nonseparating curve that meets a reference arc $\gamma$ consecutively, will always yield at least one nonseparating curve.  

\begin{proposition}\label{Prop2.6}
   Let $\gamma$ be a reference arc, and $\alpha$ be a nonsepartaing simple closed curve. Suppose that $i(\alpha,\gamma)\geq 2.$  Then there exists a simple closed curve  $\alpha'$ obtained from the nonseparating simple closed curve $\alpha$ via some surgery along $\gamma$ that is still nonseparating. 
 \end{proposition}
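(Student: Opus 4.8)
The plan is to translate the separating/non-separating dichotomy into homology and exploit the fact that the two admissible surgeries at a chosen pair of intersection points distribute the homology class of $\alpha$ between the two resulting curves. Throughout I work with $\mathbb{Z}/2$ coefficients and use the standard fact that a simple closed curve on a closed orientable surface is separating if and only if it is null-homologous: the class of a simple closed curve in $H_1(S;\mathbb{Z})$ is either $0$ (separating) or primitive (nonseparating), so this is already detected over $\mathbb{Z}/2$. This is the homological refinement of Proposition~\ref{thebigprop}, and it is exactly the ``quick test'' mentioned above, since a curve meeting $\alpha'$ an odd number of times cannot be null-homologous.

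First I fix the two points $p,q\in\alpha\cap\gamma$ between which the surgery is performed (the two points consecutive along $\gamma$ in the situation of the statement, though any pair of intersection points would serve), and let $\delta\subset\gamma$ be the sub-arc of $\gamma$ joining them. Cutting $\alpha$ at $p$ and $q$ produces two arcs $a_1,a_2$ with $a_1\cup a_2=\alpha$, so $[a_1]+[a_2]=[\alpha]$ as $\mathbb{Z}/2$-chains. Recall from the surgery description that exactly two of the four surgeries yield a simple closed curve; call the resulting curves $\beta_1$ and $\beta_2$.

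Next I verify that these two admissible surgeries close up the two \emph{different} arcs of $\alpha$. A short case check, splitting into the even case $\{++,--\}$ and the odd case $\{+-,-+\}$ according to whether the orientations of $\alpha$ at $p$ and $q$ disagree or agree, shows that $\beta_1=a_1\cup c_1$ and $\beta_2=a_2\cup c_2$, where each $c_i$ is an arc running parallel to $\delta$ (in the even case staying on one side of $\gamma$, in the odd case crossing $\gamma$ once). In either case $c_i$ lies in a disk neighborhood of $\delta$ and shares its endpoints, so $c_i$ is homologous to $\delta$ rel endpoints; consequently, as $\mathbb{Z}/2$-cycles, $[\beta_1]+[\beta_2]=[a_1]+[a_2]+2[\delta]=[\alpha]$ in $H_1(S;\mathbb{Z}/2)$.

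The conclusion is then immediate: since $\alpha$ is nonseparating, $[\alpha]\neq 0$, so $[\beta_1]$ and $[\beta_2]$ cannot both vanish. Hence at least one $\beta_i$ is non-null-homologous, i.e. nonseparating (and in particular essential), and we take $\alpha'$ to be that curve. The one step requiring care, which I expect to be the main obstacle, is the case analysis confirming that the two admissible surgeries retain the two distinct arcs $a_1,a_2$ and that each closing arc is homologous to $\delta$; once this reconnection pattern is pinned down, the mod-$2$ cancellation and the final dichotomy are forced.
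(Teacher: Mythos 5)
Your argument is correct, but it reaches the conclusion by a genuinely different route from the paper. The paper's proof splits into two cases by the relative orientation of $\alpha$ at the two consecutive intersection points: in the odd case it takes $\alpha'$ to be a pushoff-type surgery with $i(\alpha,\alpha')=1$, so $\alpha'$ is nonseparating by the intersection-number test of Proposition~\ref{thebigprop}; in the even case it argues by contradiction that if both of the curves produced by the $++$ and $--$ surgeries were separating, the surface would be cut into three pieces and reassembling them would force $\alpha$ itself to separate. Your homological argument subsumes both cases at once: the two admissible surgeries retain the two complementary arcs $a_1,a_2$ of $\alpha$, the closing arcs cancel mod $2$ inside a disk neighborhood of the subarc of $\gamma$, and hence $[\beta_1]+[\beta_2]=[\alpha]\neq 0$ in $H_1(S;\mathbb{Z}/2)$, so at least one $\beta_i$ is nonseparating. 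Your key case check --- that the odd surgeries $\{+-,-+\}$ and the even surgeries $\{++,--\}$ each close up the two \emph{distinct} arcs of $\alpha$ --- does go through (one verifies it by tracking which side of $\gamma$ each arc's endpoints lie on at $p$ and at $q$), and it is the only place where the geometry enters. What you lose relative to the paper is the sharper information in the odd case (there \emph{both} admissible surgeries give nonseparating curves, a fact the paper reuses later when it needs freedom in choosing surgeries); what you gain is a uniform, fully rigorous replacement for the paper's somewhat informal ``three components'' contradiction in the even case, and an argument that does not depend on the two intersection points being consecutive along $\gamma$ so long as the resulting curves are embedded.
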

\begin{proof}
    Orient $\gamma$ and $\alpha.$  Consider two points of intersection that are consecutive along $\gamma.$  The orientation of $\gamma$ and $\alpha$ allow us to assign an index to each intersection either $+1$ or $-1.$
    If two points of intersection have the same index, we preform an odd surgery.  The resulting curve $\alpha'$ crosses $\gamma$ one time and intersects $\alpha$ exactly once. (See Figure \ref{fig:oddsurgrep} below for +- surgery). We emphasize that we construct $\alpha'$ so that outside the local picture below, $\alpha'$ lies just to the right of $\alpha.$  Since $i(\alpha,\alpha')=1,$ $\alpha'$ is nonseparating. 

\begin{figure}[h!]
        \centering
        \includegraphics[width=3.5in]{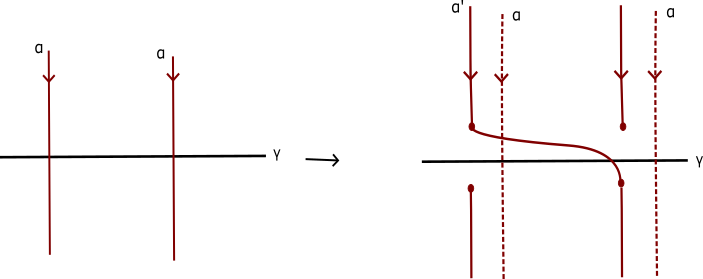}
        \caption{Odd surgery always results in a nonseparating curve}
        \label{fig:oddsurgrep}
    \end{figure}

    If the two intersection points have opposite indices we perform an even surgery.  In this case, we may need to make a choice of curve to replace $\alpha$ with, since one of the surgeries may give a nonsepartaing curve.  One of the curves remains above $\gamma$ and the other remains below $\gamma$.  We argue that at least one of these curves is nonseparating. 

    \begin{figure}[h!]
        \centering
        \includegraphics[scale=0.8]{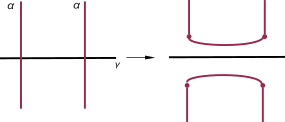}
        \caption{++ and $- -$ surgery on $\alpha$}
        \label{fig:evensurgoptions}
    \end{figure}

    Assume for contradiction that both curves were separating this would divide $S$ into three connected components. Then joining the curves back together would give back our original curve $\alpha$ however $\alpha$ would still separate our surface.  
    
    \begin{figure}[h!]
        \centering
        \includegraphics[scale=0.8]{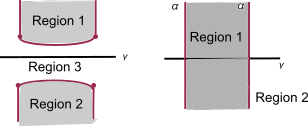}
         \caption{Figure for Proof of Propostion \ref{Prop2.6}}
        \label{fig:evensurgoptions}
    \end{figure}
    
     Thus, performing an even surgery always results in at least one nonseparating curve.
\end{proof}

\subsection{The Non-Trivial Surgeries}
We use the dot graph exactly as in \cite{birman_margalit_menasco_2016} to determine how to reduce our intersections sequence.  When restricting to the nonseparating curve complex we need to show that each dot graph polygon has a surgery that results in the removal of the polygon and whose new curves are all non-separating.  This will prove Lemma \ref{Lemma 2.2}.   

Throughout we assume that $\sigma$ is an intersection sequence of nonseparating curves in sawtooth form.\\
  \begin{figure}[h!]
        \centering
        \includegraphics[scale=0.8]{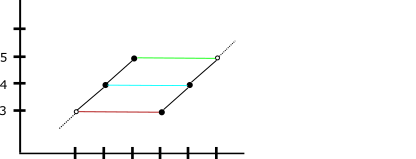}
        \caption{An empty, unpierced box}
        \label{fig:boxsurgeNC}
    \end{figure}

{\bf Case 1:} Suppose that $\sigma$ is a sequence of natural numbers in sawtooth form and that $G(\sigma)$ has an empty, unpierced box.  Then $\sigma$ is reducible. \\

This is the easy case.  Carry out the surgeries exactly as in $\mathcal{C}(S).$  Performing the surgeries one at a time, notice that regardless of the type of surgery, the resulting curve will intersect a curve adjacent to it exactly once.  The figures below demonstrates this when the box has three curves involved.  The general case follows exactly the same. See Figure \ref{fig:firstboxsurgeNC} and Figure \ref{fig:secondboxsurgeNC} for the first two steps in the surgery sequence of a box containing the curves 3, 4, and 5.

  \begin{figure}[h!]
        \centering
        \includegraphics[scale=0.8]{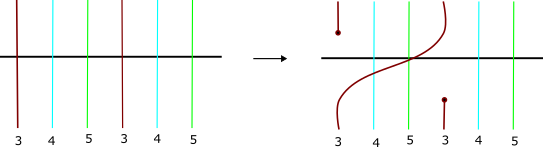}
       \caption{The intersection sequence corresponding to the above box in the dot graph.  Odd surgery on curve 3, intersects curve 4 exactly once, thus it is nonseparating.  Clearly, an even surgery would do the same so there are two surgery options for curve 3 exactly as before.}
      \label{fig:firstboxsurgeNC}
   \end{figure}
 \begin{figure}[h!]
        \centering
      \includegraphics[scale=0.8]{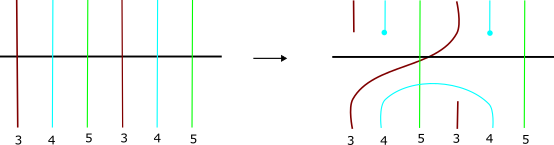}
        \caption{Now surgery on curve 4 is performed. This time an even surgery is demonstrated, as above this curve intersects curve 5 exactly one time so it is nonseparating.  An odd surgery would do the same.}
       \label{fig:secondboxsurgeNC}
    \end{figure}

Continuing in this way, removes the box from the dot graph and replaces all curves with other nonseparating curves.

{\bf Case 2:} Suppose that $\sigma$ is a sequence of natural numbers in sawtooth form and that $G(\sigma)$ has an empty, unpierced hexagon of type 1 or 2.  Then $\sigma$ is reducible. \\

We will treat the case or an empty, unpierced hexagon of type 1.  The other case follows exactly the same procedure.  The surgery instructions for this case are similar to the instructions for $\mathcal{C}(S)$ but one new idea is needed.  We introduce some new terminology to simplify the discussion. Given a empty, unpierced hexagon of type 1 in the dot graph, its vertices have the form:
$$\alpha_k,\dots,\alpha_{k+m},\alpha_k,\dots,\alpha_{k+l},\alpha_{j_1},\dots,\alpha_{j_p},\alpha_{k+l},\dots,\alpha_{k+m}$$
where $1\leq k\leq k+l\leq k+m\leq n-1,p\geq 0,$ and each $j_i<\alpha_{k+l}.$ We will call the integer $l$ the \emph{step length} of the hexagon, the number of vertices in $\alpha_{j_1},\dots,\alpha_{j_p}$ the \emph{tail length} of the hexagon, and the integer $m$ the \emph{total length} of the hexagon.  From the dot graph it is easy to see these values.  For instance the hexagon in \ref{fig:hexpoly} has a  step length 2, and tail length 4.  We will call the curve $\alpha_{k+l}$ \emph{the curve at step length l}.
  \begin{figure}[h!]
        \centering
        \includegraphics[scale=0.8]{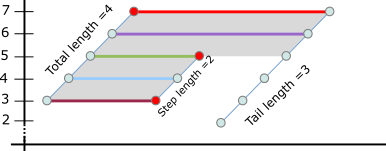}
        \caption{These three numbers along with the starting vertex seen completely determine a type 1 hexagon}
        \label{fig:hexpolylengths}
    \end{figure}

Notice that we may always assume that the tail length of a hexagon is nonzero.  It it were zero, the curve at step length $l$ would occur consecutively in the dot graph, and a trivial surgery on this curve would remove the hexagon from the dot graph. We begin the hexagon surgery the same.  Consider the curve at step length $l,$ this is the only curve that occurs three times in the hexagon.  There exists a surgery on the first two intersection points that removes the third intersection point. This surgery is determined by the orientations of the first two intersection points.  However, whatever surgery is required intersects the curves directly adjacent (above and below) to it exactly one time, thus the result is nonseparating. See figure \ref{fig:firsthexsurgeNC} below: 

 \begin{figure}[h!]
        \centering
        \includegraphics[width=\linewidth,scale=0.8]{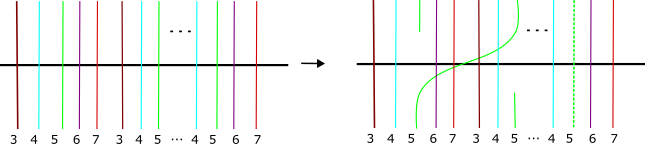}
        \caption{The intersection sequence corresponding to the above hexagon in the dot graph.  Odd surgery on curve 5, intersects curve 6 (and 4) exactly once, thus it is nonseparating.  An even surgery would do the same so any required surgery to delete the last 5 vertex in the dot graph works. The ellipses represent curves in the tail of the hexagon, the curve 4 is seen in the intersection sequence because we assume the tail length is nonzero}
        \label{fig:firsthexsurgeNC}
    \end{figure}

We now attempt to perform surgeries on the curves that occur below the curve at step length $l.$  Just as in the box case, these curves have either surgery available to them since all possible surgeries will intersect the curve directly adjacent to it (above it in the dot graph) exactly once.\\ 

 \begin{figure}[h!]
        \centering
        \includegraphics[width=\linewidth,scale=0.8]{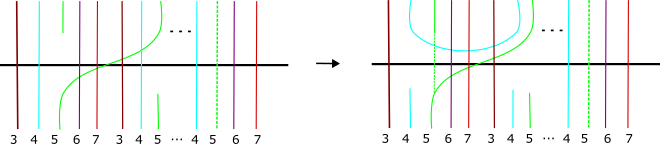}
        \caption{Now surgery on curve 4 is performed. This time an even surgery is demonstrated, as above this curve intersects the old curve 5  exactly one time so it is nonseparating.  An odd surgery would do the same.  The same follows for curve 3.}
        \label{fig:secondhexsurgeNC}
    \end{figure}
Now we are ready perform surgeries on the curves that occur after the curve at step length $l.$  The curve directly adjacent to the curve at step length $l$ (above it in the dot graph) may cause issues.  We break this into two cases:   

{\bf Subcase 2.1:} Let $m$ denote the total length of the hexagon, and let $l$ denote the step length.  If $m > l+1,$ then the surgeries follow exactly as in $\mathcal{C}(S).$  This is clear since all the curves obtained from surgery above the curve at step length $l$ intersect an adjacent curve (above or below in in the dot graph) exactly once. Notice that for the curve directly above the curve at step length $l,$ $\alpha_{k+l+1}$ we use the intersection with the curve above it $\alpha_{k+l+2}$ to show it is nonseparating.  For curves above $\alpha_{k+l+1}$ in the dot graph, say $\alpha_{j}$, we look at the intersection with $\alpha_{j-1}$ to show it is nonseparating.  See figure \ref{fig:thirdhexsurgeNC}below: 
 \begin{figure}[h!]
        \centering
        \includegraphics[width=\linewidth,scale=0.8]{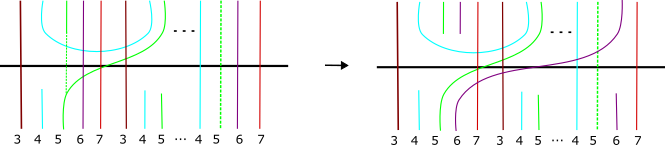}
        \caption{Surgery on curve 6 is performed.  This curve intersects the curve 7  exactly one time so it is nonseparating.  An odd surgery would do the same. Notice the surgery would intersect the old curve 5 twice so that it is not possible to argue that it is nonseparating with curve 5.}
        \label{fig:thirdhexsurgeNC}
    \end{figure}

{\bf Subcase 2.2:} Let $m$ denote the total length of the hexagon, and let $l$ denote the step length.  If $m = l+1,$ then we introduce a new surgery on the curve $\alpha_{k+l+1}.$\\
All the curves below the curve at step length $l$ have the surgeries performed on them as before, each is nonseparating. We now need to perform a surgery on the curve $\alpha_{k+l+1}$ and argue that is nonseparating.  

  \begin{figure}[h!]
        \centering
        \includegraphics[width=\linewidth,scale=0.8]{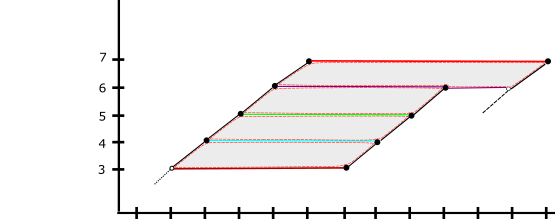}
        \caption{Surgery on the curves 3 through 5 will be just fine, since either type of surgery will intersect the curve directly above it exactly once. But surgery on 7 will intersect 6 twice.}
        \label{fig:specialhexgraphNC}
    \end{figure}

If the curve $\alpha_{k+l+1}$ requires odd surgery, then it is nonseparating because it intersects its old self exactly once as in Proposition \ref{Prop2.6}.  In the case where $\alpha_{k+l+1}$ requires even surgery and it turns out that the even surgery required forces the new curve $\alpha_{k+l+1}'$ to be separating.  We consider the part of $\alpha_{k+l}$ (the curve at step length $l$) ``inside'' of the curve $\alpha_{k+l+1}'$ (We call the ''inside'' of $\alpha_{k+l+1}'$ the part of the surface disjoint from the curve $\alpha_{k+l}'$).   Since we are assuming our hexagon has a tail, we also see the curve $\alpha_{k+l-1}$ one time inside this region.  If we were able to perform even surgery on these two parts of $\alpha_{k+l},$ the resulting curve is nonseparating since it intersects $\alpha_{k+l-1}$ exactly once.  By the assumption that $\alpha_{k+l+1}'$ is separating, the orientation of $\alpha_{k+l}$ inside of $\alpha_{k+l+1}'$ must be consistent with an even surgery, otherwise $\alpha_{k+l}$ would intersect $\alpha_{k+l+1}$.  See the figure \ref{fig:specialhexsurg1NC} below:
  \begin{figure}[h!]
        \centering
        \includegraphics[width=\linewidth,scale=0.8]{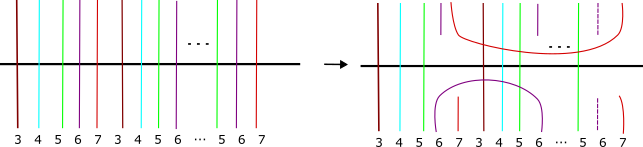}
        \caption{After performing surgery on curve 6, and doing the required even surgery on curve 7 (in figure it is ++), notice the two parts of curve 6 bounded by the new curve 7'.  If we assume that 7' separating then the two parts of 6 inside 7' must be oriented to allow for $++$ surgery. }
        \label{fig:specialhexsurg1NC}
    \end{figure}

 \newpage
Let the new nonseparating curve obtained by joining the ends of curve $\alpha_{k+l}$ be denoted by $\beta.$  We want $\beta$ to be a replacement curve for curve $\alpha_{k+l+1}$ so it must be disjoint from $\alpha_{k+l}'$ and $\alpha_{k+l+2}.$  Clearly, $\beta$ is disjoint from $\alpha_{k+l}'$ since $\alpha_{k+l+1}'$ is disjoint from $\alpha_{k+l}'$, $\beta$ is disjoint from $\alpha_{k+l+1}'$  and $\beta$ is on the other side of $\alpha_{k+l+1}'$ then $\alpha_{k+l}'.$  This is demonstrated in Figure \ref{fig:simplifiedpicreplace}

\begin{figure}[h!]
        \centering
        \includegraphics[]{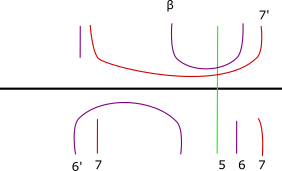}
        \caption{The curve $\beta$ is disjoint from curve 6' since 7' is disjoint from 6' and $\beta$ and  $\beta$ is on the other side of $7'$ than 6'. }
        \label{fig:simplifiedpicreplace}
    \end{figure}

Now we need to argue that the curve $\beta$ is disjoint from the curve $\alpha_{k+l+2}.$  However, because we do not see this curve in our hexagon, this means that $\alpha_{k+l+2}$ is disjoint from $\alpha_{k+l+1}'$ since we knew that it is disjoint from $\alpha_{k+l+1}$ and the curve added to $\alpha_{k+l+1}$ does not intersect $\alpha_{k+l+2}$ otherwise we would have seen the curve in the hexagon.  Thus, if $\alpha_{k+l+2}$ intersects $\beta$ it is contained entirely inside $\alpha_{k+l+1}'$ and therefore disjoint from $\alpha_{k+l}'$ this is a contradiction, since then the curve $\alpha_{k+l+1}$ would not be required in the geodesic.  So, $\beta$ is a nonseparating curve disjoint from $\alpha_{k+l+2}$ and $\alpha_{k+l}'.$  Therefore, $\beta$ is a suitable replacement for $\alpha_{k+l+1}.$ \\

The case for a hexagon of type 2 follows the exact same argument. This covers all the cases, thus proving Lemma \ref{Lemma 2.2}.

\section{Conclusion}

We have demonstrated that efficient geodesics exist in the non-separating curve complex. Moreover, we demonstrated that given any geodesic in the complex of curves
we may obtain an efficient geodesic whose vertices, with the possible exception of the endpoints, are all nonseparating curves.

\subsection{Questions}
Below we have listed a few questions for an interested reader to consider.  
\begin{enumerate}[align=left]
    \item[Question 1.] Can the $n^{6g-6}$ bound be reduced when we restrict to $\mathcal{N}(S)$?\\
     \item[Question 2.] Does replacing a separating curve with a nonseparating curve in a path ever increase the complexity measure?\\
    \item[Question 3.] Find an example of an efficient geodesic that contains a separating curve.  Are there any restrictions on the number of separating curves in a  efficient geodesics? \\
\end{enumerate}

\section{Acknowledgements}
We thank William Menasco for suggesting the problem, as well as many conversations about the efficient geodesic algorithm, flushing out some errors in earlier drafts of this paper, and discussing the new required surgery.

\bibliography{references}  

\vspace{15mm}

\newcommand{\Addresses}{{
  \bigskip

  Seth Hovland, \textsc{Department of Mathematics, University at Buffalo-SUNY,
    Buffalo, NY 14260-2900, USA}\par\nopagebreak
  \textit{E-mail address}: \texttt{sethhovl@buffalo.edu}

  \medskip

 Greg Vinal, \textsc{Department of Mathematics, University at Buffalo-SUNY,
    Buffalo, NY 14260-2900, USA}\par\nopagebreak
  \textit{E-mail address}: \texttt{gregoryv@buffalo.edu}

}}

\Addresses

\end{document}